\newcommand{\lbl}[1]{\label{#1}}
\newtheorem{theo}{Theorem}[section]
\newcommand{\be}{\begin{equation}}
\newcommand{\ee}{\end{equation}}
\newcommand\bes{\begin{eqnarray}} \newcommand\ees{\end{eqnarray}}
\newcommand{\bess}{\begin{eqnarray*}}
\newcommand{\eess}{\end{eqnarray*}}
\newcommand\ep{\varepsilon}
\newcommand\dd{\displaystyle}
\newcommand\vp{\varphi}
\newcommand\lm{\lambda}
\newcommand\yy{\infty}
\newcommand\R{\mathbb{R}}
\newcommand\ud{\underline}
\newcommand\oo{\Omega}
\newcommand\qq{\eqref}
\newcommand\sst{\scriptscriptstyle}
\begin{document}\thispagestyle{empty}
\begin{center}{\Large\bf Note on a vector-host epidemic model with spatial structure}\footnote{This work was
supported by NSFC Grant 12171120.}\\[4mm]
 {\Large Mingxin Wang\footnote{{\sl E-mail}: mxwang@hpu.edu.cn }}\\[0.5mm]
 {\small School of Mathematics and Information Science, Henan Polytechnic University, Jiaozuo  454003, China}
\end{center}

\begin{quote}
\noindent{\bf Abstract.}  Magal,  Webb and Wu [Nonlinearity 31, 5589-5614 (2018)] studied the model describing outbreak of Zika in Rio De Janerio, and provided a complete analysis of dynamical properties for the
solutions. In this note we first use a very simple approach to prove their results, and then investigate the modified version of the model concerned in their paper, with Neumann boundary condition replaced by Dirichlet boundary condition.

\noindent{\bf Keywords:}  Epidemic models; Equilibrium solutions; Global stabilities.

\noindent \textbf{AMS Subject Classification (2020)}: 35B40, 35K57, 35Q92
\end{quote}

\pagestyle{myheadings}
\section{Introduction}{\setlength\arraycolsep{2pt}
\markboth{\rm$~$ \hfill Note on a vector-host epidemic model\hfill $~$}{\rm$~$ \hfill M.X. Wang\hfill $~$}

Vector-borne diseases such as chikungunya, dengue, malaria, West Nile virus, yellow fever and Zika virus pose a major threat to global public health \cite{PAHO1, PAHO2, WHO1, WHO2}. In recent years, many authors
have proposed reaction-diffusion models to study the transmission of diseases in spatial settings. Among them, Fitzgibbon et al. \cite{FMW} proposed and studied a spatially correlated bounded vector-host epidemic model that qualitatively described the 2015-2016 Zika epidemic in Rio de Janeiro. The global dynamics of the spatial model were further established based on the basic reproduction number. Suppose that individuals are living in a bounded smooth domain $\oo\subset \R^n$, and $\nu$ is the outward normal vector of $\partial\Omega$. Let $H_i(x, t), V_u(x, t)$ and $V_i(x, t)$ be the densities of infected hosts, uninfected vectors, and infected vectors at position $x$ and time $t$, respectively. Then the model proposed in \cite{FMW} to
study the outbreak of Zika in Rio De Janerio is the following reaction-diffusion system\vspace{-1mm}
 \begin{equation}\begin{cases}
\partial_t H_i-\nabla\cdot d_1(x)\nabla H_i=-\rho(x)H_i+\sigma_1(x) H_u(x)V_i,\, &x\in\oo,\; t>0,\\
\partial_t V_u-\nabla\cdot d_2(x)\nabla V_u=-\sigma_2(x)V_uH_i +\beta(x)(V_u+V_i)-\mu(x)(V_u+V_i)V_u, &x\in\oo,\; t>0,\\
\partial_t V_i-\nabla\cdot d_2(x)\nabla V_i=\sigma_2(x)V_uH_i-\mu(x)(V_u+V_i)V_i,\, &x\in\oo,\; t>0,\\
\partial_\nu H_i=\partial_\nu V_u=\partial_\nu V_i=0,\;\;&x\in\partial\oo,\; t>0,\\
(H_i(\cdot, 0), V_u(\cdot, 0), V_i(\cdot, 0))=(H_{i0}, V_{u0}, V_{i0})\in C(\bar\oo;\,\mathbb{R}^3_+),
	\end{cases}\label{1.1}\vspace{-1mm}
\end{equation}
where $d_1, d_2\in C^{1+\alpha}(\bar\oo)$ and $\rho, \beta, \sigma_1, \sigma_2, \mu\in C^\alpha(\bar\oo)$ are strictly positive, and $H_u\in C^\alpha(\bar\oo)$ is nonnegative and non-trivial. The flux of new infected humans is given by $\sigma_1(x) H_u(x)V_i(x,t)$ in which $H_u(x)$ is the density of susceptible population depending on the spatial location $x$. The initial functions satisfy $\partial_\nu H_{i0}=\partial_\nu V_{u0}=\partial_\nu V_{i0}=0$ on $\partial\oo$.

When establishing the model \qq{1.1}, the main assumption is that the susceptible human population is (almost) not affected by the epidemic during a relatively short period of time and therefore the flux of new infected is (almost) constant. Such a functional response mainly permits to take care of realistic density of population distributed in space.

The model \qq{1.1} has been thoroughly studied by Magal, Webb and Wu \cite{MWW18,MWW19}. In \cite{MWW18}, they found  the basic reproduction number $\mathcal{R}_0$ and show that $\mathcal{R}_0$ is a threshold parameter: if $\mathcal{R}_0\le 1$ the disease free equilibrium is globally stable; if $\mathcal{R}_0>1$ the model has a unique globally stable positive equilibrium. In \cite{MWW19}, it was shown that the basic reproduction number $\mathcal{R}_0$ can be defined as the spectral radius of a product of a local basic reproduction number $\mathcal{R}$ and strongly positive compact linear operators
with spectral radii one.

In this note we first use a very simple approach to prove the main results of \cite{MWW18}, and then study the other version of \qq{1.1} coupled with homogeneous Dirichlet boundary condition
\begin{equation}
	\begin{cases}
\partial_t H_i-\nabla\cdot d_1(x)\nabla H_i=-\rho(x)H_i+\sigma_1(x) H_u(x)V_i,\, &x\in\oo,\; t>0,\\
\partial_t V_u\!-\!\nabla\cdot d_2(x)\nabla V_u=-\sigma_2(x)V_uH_i \!+\!\beta(x)(V_u\!+\!V_i)\!-\!\mu(x)(V_u\!+\!V_i)V_u, &x\in\oo,\; t>0,\\
\partial_t V_i-\nabla\cdot d_2(x)\nabla V_i=\sigma_2(x)V_uH_i-\mu(x)(V_u+V_i)V_i,\, &x\in\oo,\; t>0,\\
 H_i=V_u=V_i=0,&x\in\partial\oo,\; t>0,\\
H_i(x, 0)=H_{i0}(x), V_u(x, 0)=V_{u0}(x), V_i(x, 0)=V_{i0}(x),
	\end{cases}
	\label{1.2}
 \end{equation}
where the initial data $H_{i0}, V_{u0}, V_{i0}\in C(\bar\oo)$, and are positive in $\oo$ and satisfy $H_{i0}=V_{u0}=V_{i0}=0$ on $\partial\oo$, all parameters are same as in \qq{1.1}. It is easy to see that the solutions $(H_i, V_u, V_i)$ of \qq{1.1} and \qq{1.2} are unique, positive and bounded.

For the sake of convenience, we denote $\nabla\cdot d_1(x)\nabla={\cal L}_1$ and $\nabla\cdot d_2(x)\nabla={\cal L}_2$.

\section{Preliminaries on the equilibrium solutions of \qq{1.1} and \qq{1.2}}\lbl{s2}\setcounter{equation}{0}

Equilibrium problems of \qq{1.1} and \qq{1.2} are the following boundary value problems\vspace{-1mm}
 $$\begin{cases}
-{\cal L}_1 H_i^*=-\rho(x)H_i^*+\sigma_1(x)H_u(x)V_i^*,\, &x\in\oo,\\
-{\cal L}_2 V_u^*=-\sigma_2(x)V_u^*H_i^* +\beta(x)(V_u^*+V_i^*)-\mu(x)(V_u^*+V_i^*)V_u^*, &x\in\oo,\\
-{\cal L}_2 V_i^*=\sigma_2(x)V_u^*H_i^*-\mu(x)(V_u^*+V_i^*)V_i^*,\, &x\in\oo,\\
{\cal B}[H_i^*]={\cal B}[V_u^*]={\cal B}[V_i^*]=0,&x\in\partial\oo
	\end{cases}
	\eqno(2.1_{\cal B})\vspace{-1mm}$$
with ${\cal B}[W]=\partial_\nu W$ and ${\cal B}[W]=W$, respectively.
Let $\lm^{\cal B}(\beta)$ be the principal eigenvalue of
 \bess
	\begin{cases}
-{\cal L}_2\varphi-\beta(x)\varphi=\lm\varphi, &x\in\oo,\\
{\cal B}[\varphi]=0,&x\in\partial\oo.
	\end{cases}
	\eess
Assume that $(H_i^*, V_u^*, V_i^*)$ is a nonnegative solution of $(2.1_{\cal B})$ and set $V_{\cal B}=V_u^*+V_i^*$. Then $V_{\cal B}$ satisfies\vspace{-1mm}
 $$\begin{cases}
-{\cal L}_2 V_{\cal B}=\beta(x)V_{\cal B}-\mu(x)V_{\cal B}^2, &x\in\oo,\\
{\cal B}[V_{\cal B}]=0,&x\in\partial\oo.
	\end{cases}\eqno(2.2_{\cal B})\vspace{-1mm}$$
If $\lm^{\cal B}(\beta)\ge 0$, then $(2.2_{\cal B})$ has no positive solution. Therefore, $V_{\cal B}=0$, i.e., $V_u^*=V_i^*=0$, and then $H_i^*=0$ by the first equation of $(2.1_{\cal B})$. So $(0,0,0)$ is the only nonnegative solution of $(2.1_{\cal B})$. If $\lm^{\cal B}(\beta)<0$, then $(2.2_{\cal B})$ has a unique positive solution $V_{\cal B}$ and $V_{\cal B}$ is globally asymptotically stable.

Assume $\lm^{\cal B}(\beta)<0$ and let $V_{\cal B}$ be the unique positive solution of $(2.2_{\cal B})$. To investigate positive solutions of $(2.1_{\cal B})$ is equivalent to study positive solutions $(H_i^*, V_i^*)$ of\vspace{-1mm}
 $$\begin{cases}
-{\cal L}_1 H_i^*=-\rho(x)H_i^*+\sigma_1(x)H_u(x)V_i^*,\, &x\in\oo,\\
-{\cal L}_2 V_i^*=\sigma_2(x)(V_{\cal B} (x)-V_i^*)H_i^*-\mu(x)V_{\cal B} (x)V_i^*,\, &x\in\oo,\\
{\cal B}[H_i^*]={\cal B}[V_i^*]=0,&x\in\partial\oo
	\end{cases}\eqno(2.3_{\cal B})\vspace{-1mm}$$
satisfying $V_i^*<V_{\cal B}$. Clearly, $(0, 0)$ is the unique trivial nonnegative solution of $(2.3_{\cal B})$. The linearized eigenvalue problem of $(2.3_{\cal B})$ at $(0,0)$ is
 $$\begin{cases}
-{\cal L}_1\phi_1+\rho(x)\phi_1-\sigma_1(x)H_u(x)\phi_2=\lm\phi_1,\, &x\in\oo,\\
-{\cal L}_2\phi_2-\sigma_2(x)V_{\cal B} (x)\phi_1
+\mu(x)V_{\cal B} (x)\phi_2=\lm\phi_2,\, &x\in\oo,\\
{\cal B}[\phi_1]={\cal B}[\phi_2]=0,&x\in\partial\oo.
	\end{cases}\eqno(2.4_{\cal B})$$
By the results of \cite{Sw92}, the problem $(2.4_{\cal B})$ has a unique principal eigenvalue $\lm^{\cal B}(V_{\cal B})$ with positive eigenfunction $\phi=(\phi_1,\phi_2)^T$. Moreover, when ${\cal B}[W]=\partial_\nu W$ (the Neumann boundary condition),  $\lm^{\cal B}(V_{\cal B} )$ has the same sign with $\mathcal{R}_0-1$, where $\mathcal{R}_0$ is given in \cite{MWW18}.

\begin{theo}\lbl{th2.1} Assume $\lm^{\cal B}(\beta)<0$. If $(2.3_{\cal B})$ has a positive solution $(H_i^*, V_i^*)$, then $\lm^{\sst{\cal B}}(V_{\sst{\cal B}})<0$.
\end{theo}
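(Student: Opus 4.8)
The plan is to read $(2.4_{\cal B})$ as the eigenvalue problem $\mathcal{A}\phi=\lm\phi$ for the linear operator
\[
\mathcal{A}\begin{pmatrix}\phi_1\\ \phi_2\end{pmatrix}=\begin{pmatrix}-{\cal L}_1\phi_1+\rho\phi_1-\sigma_1 H_u\phi_2\\ -{\cal L}_2\phi_2-\sigma_2 V_{\cal B}\phi_1+\mu V_{\cal B}\phi_2\end{pmatrix},
\]
whose matrix of zeroth-order coefficients has nonpositive off-diagonal entries $-\sigma_1H_u\le0$ and $-\sigma_2V_{\cal B}\le0$; hence the system is cooperative and the theory of \cite{Sw92} applies to it. The first step is a direct computation: I feed a positive solution $(H_i^*,V_i^*)$ of $(2.3_{\cal B})$ into $\mathcal{A}$. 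The first equation of $(2.3_{\cal B})$ makes the first component vanish identically, while eliminating $-{\cal L}_2V_i^*$ through the second equation of $(2.3_{\cal B})$ yields
\[
\mathcal{A}\begin{pmatrix}H_i^*\\ V_i^*\end{pmatrix}=\begin{pmatrix}0\\ -\sigma_2 H_i^* V_i^*\end{pmatrix}.
\]
Since $H_i^*,V_i^*>0$ in $\oo$ and $\sigma_2>0$, the right-hand side is $\le0$ and not identically zero.

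Next I would pass to the adjoint. Because ${\cal L}_1,{\cal L}_2$ are self-adjoint in divergence form under either boundary operator ${\cal B}$, the formal adjoint of $\mathcal{A}$ is obtained by transposing the coefficient matrix,
\[
\mathcal{A}^*\begin{pmatrix}\psi_1\\ \psi_2\end{pmatrix}=\begin{pmatrix}-{\cal L}_1\psi_1+\rho\psi_1-\sigma_2 V_{\cal B}\psi_2\\ -{\cal L}_2\psi_2-\sigma_1 H_u\psi_1+\mu V_{\cal B}\psi_2\end{pmatrix},
\]
which is again cooperative. By \cite{Sw92}, the adjoint problem $\mathcal{A}^*\psi=\lm\psi$, ${\cal B}[\psi_1]={\cal B}[\psi_2]=0$, has the same principal eigenvalue $\lm^{\cal B}(V_{\cal B})$ as $(2.4_{\cal B})$, with a strongly positive eigenfunction $\psi=(\psi_1,\psi_2)^T$, i.e. $\psi_1,\psi_2>0$ in $\oo$.

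The final step pairs the two. Multiplying the identity for $\mathcal{A}(H_i^*,V_i^*)^T$ by $\psi$, integrating over $\oo$, and integrating by parts (the boundary contributions vanish because ${\cal B}[H_i^*]={\cal B}[V_i^*]={\cal B}[\psi_1]={\cal B}[\psi_2]=0$, which covers both the Dirichlet and the Neumann case) transfers $\mathcal{A}$ onto $\psi$:
\[
\int_\oo\!\Big(H_i^*(\mathcal{A}^*\psi)_1+V_i^*(\mathcal{A}^*\psi)_2\Big)\dx=\int_\oo\!\Big(0\cdot\psi_1-\sigma_2 H_i^* V_i^*\psi_2\Big)\dx.
\]
Using $\mathcal{A}^*\psi=\lm^{\cal B}(V_{\cal B})\psi$ on the left gives
\[
\lm^{\cal B}(V_{\cal B})\int_\oo\!\big(H_i^*\psi_1+V_i^*\psi_2\big)\dx=-\int_\oo\sigma_2 H_i^* V_i^*\psi_2\,\dx.
\]
The right-hand side is strictly negative while the integral multiplying $\lm^{\cal B}(V_{\cal B})$ is strictly positive, so $\lm^{\cal B}(V_{\cal B})<0$, as claimed.

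The routine elimination of $-{\cal L}_2V_i^*$ is where the decisive sign $-\sigma_2 H_i^* V_i^*$ is produced, so that manipulation must be carried out carefully. The main conceptual obstacle, however, is the use of the adjoint: one must know that the cooperative structure survives transposition and that \cite{Sw92} supplies a strictly positive adjoint principal eigenfunction sharing the eigenvalue $\lm^{\cal B}(V_{\cal B})$ — this is precisely what replaces the self-adjointness available in the scalar case and makes the sign test conclusive. A minor point to confirm is the strict positivity $H_i^*,V_i^*>0$ throughout $\oo$, so that the right-hand integral does not vanish; this follows from the strong maximum principle applied to $(2.3_{\cal B})$.
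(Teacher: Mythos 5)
Your proposal is correct, but it takes a genuinely different route from the paper. The paper proceeds by contradiction with fixed-point/spectral-radius machinery: assuming $\lm^{\cal B}(V_{\cal B})\ge 0$, the matrix $A_{\cal B}$ (carrying $\lm^{\cal B}(V_{\cal B})$ on its diagonal and the couplings $\sigma_1H_u$, $\sigma_2V_{\cal B}$ off the diagonal) is nonnegative, so $T_{\cal B}:=\mathscr{L}_{\cal B}^{-1}A_{\cal B}$ is strongly positive and compact; the positive eigenfunction of $(2.4_{\cal B})$ is a fixed point of $T_{\cal B}$, whence $r(T_{\cal B})=1$, while the positive solution of $(2.3_{\cal B})$ yields $u>0$ with $T_{\cal B}u\ge u+(0,\chi)^T$, $\chi>0$, contradicting $r(T_{\cal B})=1$ by \cite[Theorem 3.2 (iv)]{Am76}. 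You instead prove the strict inequality directly: your cancellation $\mathcal{A}(H_i^*,V_i^*)^T=(0,-\sigma_2H_i^*V_i^*)^T$ is correct (it is essentially the same algebraic fact the paper exploits when it peels off the term $\sigma_2 V_i^*H_i^*$), and testing it against a positive principal eigenfunction of the adjoint cooperative system forces the sign of $\lm^{\cal B}(V_{\cal B})$ with no contradiction argument and no recourse to \cite{Am76}. What your route buys is brevity and directness; what it costs is the duality input, and that is the one place where you are loose: \cite{Sw92} applied to the transposed (still cooperative, still fully coupled) system gives you a principal eigenvalue $\tilde\lm$ with positive eigenfunction $\psi$, but the identification $\tilde\lm=\lm^{\cal B}(V_{\cal B})$ is not merely a citation. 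It does, however, follow in one line: pairing the positive eigenfunction $\phi$ of $(2.4_{\cal B})$ with $\psi$ via Green's identity (boundary terms vanish under either boundary operator) gives $\lm^{\cal B}(V_{\cal B})\int_\oo\phi\cdot\psi\,\dx=\tilde\lm\int_\oo\phi\cdot\psi\,\dx$ with $\int_\oo\phi\cdot\psi\,\dx>0$, hence $\tilde\lm=\lm^{\cal B}(V_{\cal B})$; alternatively, run your final pairing with $\tilde\lm$ in place of $\lm^{\cal B}(V_{\cal B})$ to conclude $\tilde\lm<0$ and then transfer the sign by the same identity. With that step made explicit your argument is complete: the elimination producing $-\sigma_2H_i^*V_i^*$, the vanishing of boundary terms in both the Neumann and Dirichlet cases, and the strict signs of the two integrals (since $H_i^*,V_i^*,\psi_1,\psi_2>0$ in $\oo$ and $\sigma_2>0$) are all as you state.
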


\begin{proof} Define an operator
 \bess
 \mathscr{L}_{\sst{\cal B}}\psi=\left(\begin{array}{cc}-{\cal L}_1\psi_1+\rho(x)\psi_1,\\
 -{\cal L}_2\psi_2+\mu(x)V_{\sst{\cal B}}(x)\psi_2\end{array}\right),\;\;\; \psi_i\in C^2(\bar\oo), \;{\cal B}[\psi_i]_{\partial\oo}=0,\;i=1,2, \eess
and a matrix
 \bess
 A_{\sst{\cal B}}(x)=\left(\begin{array}{cc}
 \lm^{\sst{\cal B}}(V_{\sst{\cal B}}) & \sigma_1(x)H_u(x)\\ \sigma_2(x)V_{\sst{\cal B}}(x) & \lm^{\sst{\cal B}}(V_{\sst{\cal B}})\end{array}\right).
 \eess
Then $\mathscr{L}_{\sst{\cal B}}$ is reversible and $\mathscr{L}_{\sst{\cal B}}^{-1}$ is strongly positive and compact. On the contrary we assume $\lm^{\sst{\cal B}}(V_{\sst{\cal B}})\ge 0$. Then the operator $T_{\sst{\cal B}}:=\mathscr{L}_{\sst{\cal B}}^{-1}A_{\sst{\cal B}}(x)$ is also strongly positive and compact, and $r(T_{\sst{\cal B}})=1$ as $\phi_1, \phi_2>0$ and $\phi=(\phi_1, \phi_2)^T$ satisfies $\phi=\mathscr{L}_{\sst{\cal B}}^{-1}A_{\sst{\cal B}}(x)[\phi]$ by $(2.4_{\sst{\cal B}})$.

On the other hand, since $(H_i^*, V_i^*)$ is a positive solution of $(2.3_{\sst{\cal B}})$, we have
 \bess
 r(T_{\sst{\cal B}})\left(\begin{array}{cc}H_i^*\\[-0.5mm]
 V_i^*\end{array}\right)=\left(\begin{array}{cc}H_i^*\\[-0.5mm]
 V_i^*\end{array}\right)=T_{\sst{\cal B}}\left(\begin{array}{cc}H_i^*\\[-0.5mm]
 V_i^*\end{array}\right)-\mathscr{L}_{\sst{\cal B}}^{-1}\left(\begin{array}{cc}
 \lm^{\sst{\cal B}}(V_{\sst{\cal B}}) & 0\\[-0.5mm] \sigma_2(x)V_i^*\; & \; \lm^{\sst{\cal B}}(V_{\sst{\cal B}})\end{array}\right)\left(\begin{array}{cc}H_i^*\\[-0.5mm]
 V_i^*\end{array}\right),
 \eess
and
 \bess
 \mathscr{L}_{\sst{\cal B}}^{-1}\left(\begin{array}{cc}
 \lm^{\sst{\cal B}}(V_{\sst{\cal B}}) & 0\\[-0.5mm] \sigma_2(x)V_i^*\; & \; \lm^{\sst{\cal B}}(V_{\sst{\cal B}})\end{array}\right)\left(\begin{array}{cc}H_i^*\\[-0.5mm]
 V_i^*\end{array}\right)\ge\mathscr{L}_{\sst{\cal B}}^{-1}\left(\begin{array}{cc}
 0 \\[-0.5mm] \sigma_2(x)V_i^*H_i^*\end{array}\right)=:\left(\begin{array}{cc}
 0 \\[-0.5mm] \chi(x)\end{array}\right),
 \eess
where $\chi(x)>0$. This is impossible by the conclusion \cite[Theorem 3.2 (iv)]{Am76}. \end{proof}

\section{Dynamical properties of \qq{1.1}--main results in \cite{MWW18} and proofs}\lbl{s3}\setcounter{equation}{0}

In this section we focus on the model \qq{1.1} and write the boundary operator ${\cal B}$ as ${\cal N}$ to represent the Neumann boundary condition and $V_{\sst{\cal B}}=V_{\sst{\cal N}}$. Clearly, $\lm^{\sst{\cal N}}(\beta)<0$ since $\beta(x)>0$.

\begin{theo}\lbl{th3.1} Then the  problem $(2.3_{\sst{\cal N}})$ has a positive solution $(H_i^*, V_i^*)$ if and only if $\lm^{\sst{\cal N}}(V_{\sst{\cal N}})<0$. Moreover, $(H_i^*, V_i^*)$ is unique and satisfies $V_i^*<V_{\sst{\cal N}}$ when it exists. Therefore, $(2.1_{\sst{\cal N}})$  has a positive solution $(H_i^*, V_u^*, V_i^*)$ if and only if $\lm^{\sst{\cal N}}(V_{\sst{\cal N}})<0$, and $(H_i^*, V_u^*, V_i^*)$ is unique and takes the form $(H_i^*, V_{\sst{\cal N}}-V_i^*, V_i^*)$  when it exists.
\end{theo}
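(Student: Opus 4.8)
The plan is to read off one implication from Theorem~\ref{th2.1}, obtain existence by a sub-/super-solution construction, and settle uniqueness by a scaling (sliding) argument; the whole scheme hinges on first showing that $(2.3_{\sst{\cal N}})$ is a cooperative system, which is true precisely in the region $\{V_i<V_{\sst{\cal N}}\}$. The \emph{only if} direction is immediate: if $(2.3_{\sst{\cal N}})$ has a positive solution, then Theorem~\ref{th2.1} (taken with ${\cal B}={\cal N}$) gives $\lm^{\sst{\cal N}}(V_{\sst{\cal N}})<0$.

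Next I would prove the a priori bound $V_i^*<V_{\sst{\cal N}}$ for \emph{every} positive solution, since this is what guarantees cooperativity. Putting $w=V_{\sst{\cal N}}-V_i^*$ and subtracting the $V_i^*$-equation of $(2.3_{\sst{\cal N}})$ from $(2.2_{\sst{\cal N}})$, a direct computation gives
\be
-{\cal L}_2 w+\big(\mu(x)V_{\sst{\cal N}}+\sigma_2(x)H_i^*\big)w=\beta(x)V_{\sst{\cal N}}>0\quad\text{in }\oo,\qquad {\cal N}[w]=0.
\ee
Since $H_i^*\ge0$ the zeroth-order coefficient is nonnegative and the right-hand side is strictly positive, so the maximum principle forces $w\ge0$, and then the strong maximum principle with the Hopf lemma upgrades this to $w>0$ on $\bar\oo$, i.e. $V_i^*<V_{\sst{\cal N}}$. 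In the sector $\{0\le V_i\le V_{\sst{\cal N}}\}$ the off-diagonal reaction derivatives $\sigma_1 H_u$ and $\sigma_2(V_{\sst{\cal N}}-V_i)$ are nonnegative, hence $(2.3_{\sst{\cal N}})$ is cooperative there.

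For the \emph{if} direction I would construct an ordered pair of coupled sub-/super-solutions. As super-solution take $(\bar H,\bar V)=(H^\dagger,V_{\sst{\cal N}})$, where $H^\dagger>0$ solves the linear problem $-{\cal L}_1 H^\dagger+\rho H^\dagger=\sigma_1 H_u V_{\sst{\cal N}}$: the first inequality then holds with equality and the second reduces to $\beta V_{\sst{\cal N}}\ge0$. As sub-solution take $(\underline H,\underline V)=\ep(\phi_1,\phi_2)$, the positive principal eigenfunction of $(2.4_{\sst{\cal N}})$; using $\lm^{\sst{\cal N}}(V_{\sst{\cal N}})<0$ the first inequality is immediate, while the second reduces to $\lm^{\sst{\cal N}}(V_{\sst{\cal N}})\le-\ep\,\sigma_2(x)\phi_1$, which holds for all small $\ep$ because the left-hand side is a fixed negative number and the right-hand side tends to $0^-$. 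For $\ep$ small one also has $\ep(\phi_1,\phi_2)\le(H^\dagger,V_{\sst{\cal N}})$ on $\bar\oo$, so the standard monotone iteration for cooperative systems converges to a solution $(H_i^*,V_i^*)$ with $\ep(\phi_1,\phi_2)\le(H_i^*,V_i^*)\le(H^\dagger,V_{\sst{\cal N}})$, which is therefore positive.

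Finally, for uniqueness I would exploit strict sub-homogeneity of the reaction. With $f_1(H,V)=-\rho H+\sigma_1 H_u V$ (homogeneous of degree one) and $f_2(H,V)=\sigma_2(V_{\sst{\cal N}}-V)H-\mu V_{\sst{\cal N}}V$, one has $f_2(tH,tV)=tf_2(H,V)+t(1-t)\sigma_2 HV>tf_2(H,V)$ for $t\in(0,1)$ and $H,V>0$. Given two positive solutions $(H_1,V_1)$, $(H_2,V_2)$, both $<V_{\sst{\cal N}}$ by the bound above, set $\tau=\sup\{t>0:(H_1,V_1)\ge t(H_2,V_2)\}$; if $\tau<1$ then $\tau(H_2,V_2)$ is a strict sub-solution dominated by the solution $(H_1,V_1)$, and cooperativity together with the strong maximum principle produces a strict gap, contradicting the maximality of $\tau$. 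Hence $\tau\ge1$, so $(H_1,V_1)\ge(H_2,V_2)$, and by symmetry equality holds. The assertions for $(2.1_{\sst{\cal N}})$ then follow from the equivalence set up in Section~\ref{s2} by reading off $V_u^*=V_{\sst{\cal N}}-V_i^*>0$. The main obstacle I expect is this uniqueness step: the sliding argument must be run inside the cooperative region, so the bound $V_i^*<V_{\sst{\cal N}}$ is indispensable, and one must verify that the difference between a solution and a strict sub-solution solves a cooperative linear system to which the strong maximum principle applies — the same strongly-positive-operator input already used in Theorem~\ref{th2.1}.
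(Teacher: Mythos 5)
Your proof is correct, and its skeleton (necessity from Theorem \ref{th2.1}, existence by monotone iteration between ordered lower/upper solutions, uniqueness by a sliding argument) matches the paper's; but the execution differs in two genuine ways. First, the paper never works with $(2.3_{\sst{\cal N}})$ directly: it replaces $\sigma_2(V_{\sst{\cal N}}-V_i)H_i$ by the truncated term $\sigma_2(V_{\sst{\cal N}}+\ep-V_i)^+H_i$ and proves existence and uniqueness for the whole perturbed family $(3.1_\ep)$, $|\ep|\le\ep_0$, recovering the theorem at $\ep=0$. The truncation makes the system cooperative on all of $\{V_i\ge 0\}$, so no a priori bound on an arbitrary second solution is needed; the price is that the bound $\hat V_{i,\ep}^*<V_{\sst{\cal N}}+\ep$ only emerges \emph{inside} the uniqueness argument, which is why the paper must run the sliding in both directions ($\bar s\uparrow 1$ and then $\ud\zeta\downarrow 1$): the constructed solution and the arbitrary one play asymmetric roles. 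You instead prove the a priori bound $V_i^*<V_{\sst{\cal N}}$ for \emph{every} positive solution of $(2.3_{\sst{\cal N}})$ up front; your maximum-principle computation for $w=V_{\sst{\cal N}}-V_i^*$ is exactly the one the paper uses later in the Dirichlet case (proof of part (i) of the theorem in Section 4). This restores symmetry between two candidate solutions, so a single sliding plus symmetry suffices, with the strict gap supplied by the sub-homogeneity identity $f_2(tH,tV)=tf_2(H,V)+t(1-t)\sigma_2 HV$ — the same inequality hidden in the paper's ``careful calculation''. What each approach buys: the paper's $\ep$-family is not wasted generality, since precisely these perturbed problems serve as the upper and lower barriers in the dynamical proof of Theorem \ref{th3.2}, so you would have to redo your construction for $\ep\neq 0$ to get the global stability; on the other hand, your version is leaner for the equilibrium statement itself, and your a priori bound closes an identification step the paper leaves implicit (a positive solution of $(2.3_{\sst{\cal N}})$ coincides with a solution of the truncated problem $(3.1_0)$ only once one knows $V_i^*\le V_{\sst{\cal N}}$, which is exactly what your bound provides and what the theorem's ``and satisfies $V_i^*<V_{\sst{\cal N}}$'' clause asserts).
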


\begin{proof} By Theorem \ref{th2.1}, it suffices to prove that if $\lm^{\sst{\cal N}}(V_{\sst{\cal N}})<0$, then $(2.3_{\sst{\cal N}})$ has a unique positive solution $(H_i^*, V_i^*)$ and $V_i^*<V_{\sst{\cal N}}$. To this aim, we prove the following general conclusion.
\begin{enumerate}[leftmargin=6mm]
\item[$\bullet$] There is $0<\ep_0\ll 1$ such that, when $|\ep|\le\ep_0$, the problem\vspace{-1mm}
 $$\begin{cases}
-{\cal L}_1 H_i=-\rho(x)H_i+\sigma_1(x)H_u(x) V_i,\, &x\in\oo,\\
-{\cal L}_2 V_i=\sigma_2(x)(V_{\sst{\cal N}}(x)+\ep-V_i)^+ H_i-\mu(x)(V_{\sst{\cal N}}(x)-\ep)V_i,\, &x\in\oo,\\
 \partial_\nu H_i=\partial_\nu V_i=0,\;\;\;&x\in\partial\oo\vspace{-1mm}
	\end{cases}\eqno(3.1_\ep)$$
has a unique positive solution $(H_{i, \ep}^*, V_{i, \ep}^*)$, and $V_{i, \ep}^*<V_{\sst{\cal N}}+\ep$.
\end{enumerate}

{\it Existence of positive solution of $(3.1_\ep)$}. Let $\lm^{\sst{\cal N}}(V_{\sst{\cal N}}; \ep)$ be the principal eigenvalue of
 $$\begin{cases}
-{\cal L}_1\phi_1+\rho(x)\phi_1-\sigma_1(x)H_u(x)\phi_2
 =\lm\phi_1,\, &x\in\oo,\\
-{\cal L}_2\phi_2-\sigma_2(x)(V_{\sst{\cal N}}(x)+\ep)\phi_1
+\mu(x)(V_{\sst{\cal N}}(x)-\ep)\phi_2=\lm\phi_2,\, &x\in\oo,\\
 \partial_\nu\phi_1=\partial_\nu\phi_2=0,&x\in\partial\oo,
	\end{cases}\eqno(3.2_\ep)$$
(the existence and uniqueness of $\lm^{\sst{\cal N}}(V_{\sst{\cal N}}; \ep)$ is given in \cite{Sw92}). As $\lm^{\sst{\cal N}}(V_{\sst{\cal N}})<0$, there is $0<\ep_0\ll 1$ such that, when $|\ep|\le\ep_0$, we have $\lm^{\sst{\cal N}}(V_{\sst{\cal N}}; \ep)<0$, and $V_{\sst{\cal N}}(x)\pm\ep>0$, $\ep^2\mu(x)<\beta(x)V_{\sst{\cal N}}(x)$ in $\bar\oo$. Let $\bar H_i^*$ be the unique positive solution of the linear problem
 \bess\begin{cases}
-{\cal L}_1\bar H_i^*+\rho(x)\bar H_i^*=\sigma_1(x)H_u(x)(V_{\sst{\cal N}}(x)+\ep),\, &x\in\oo,\\
 \partial_\nu\bar H_i^*=0,&x\in\partial\oo.
	\end{cases} \eess
Then $(\bar H_i^*,V_{\sst{\cal N}}+\ep)$ is a strict upper solution of $(3.1_\ep)$.
Let $(\phi_1,\phi_2)$ be the positive eigenfunction corresponding to $\lm^{\sst{\cal N}}(V_{\sst{\cal N}}; \ep)$. It is easy to verify that $\delta(\phi_1, \phi_2)$ is a lower solution of $(3.1_\ep)$ and $\delta(\phi_1, \phi_2)\le(\bar H_i^*,V_{\sst{\cal N}}+\ep)$ provided $\delta>0$ is suitably small. By the upper and lower solutions method, $(3.1_\ep)$ has at least one positive solution $(H_{i, \ep}^*, V_{i, \ep}^*)$, and $V_{i, \ep}^*<V_{\sst{\cal N}}+\ep$, $H_{i, \ep}^*<\bar H_i^*$.\vskip 3pt

{\it Uniqueness of positive solutions of $(3.1_\ep)$}. Let $(\hat H_{i, \ep}^*, \hat V_{i, \ep}^*)$ be another positive solution of $(3.1_\ep)$. We can find $0<s<1$ such that $s(\hat H_{i,\ep}^*, \hat V_{i, \ep}^*)\le(H_{i, \ep}^*, V_{i, \ep}^*)$ in $\bar\oo$. Set
  \[\bar s=\sup\{0<s\le 1: s(\hat H_{i, \ep}^*, \hat V_{i, \ep}^*)\le(H_{i, \ep}^*, V_{i, \ep}^*) \;{\rm ~ in  ~ }\bar\oo\}.\]
Then $0<\bar s\le1$ and $\bar s(\hat H_{i, \ep}^*, \hat V_{i, \ep}^*)\le(H_{i, \ep}^*, V_{i, \ep}^*)$ in $\bar\oo$. We shall prove $\bar s=1$. If $\bar s<1$, then $W:=H_{i,\ep}^*-\bar s\hat H_{i,\ep}^*\ge 0, Z:=V_{i,\ep}^*-\bar s\hat V_{i,\ep}^*\ge 0$.
Since $\bar s\hat V_{i, \ep}^*\le V_{i, \ep}^*<V_{\sst{\cal N}}+\ep$ and $\hat V_{i, \ep}^*>\bar s\hat V_{i, \ep}^*$, we have
 \bess
 &(V_{\sst{\cal N}}+\ep-V_{i, \ep}^*)^+=V_{\sst{\cal N}}+\ep-V_{i, \ep}^*,\;\;\;
 (V_{\sst{\cal N}}+\ep-\hat V_{i, \ep}^*)^+<V_{\sst{\cal N}}+\ep-\bar s\hat V_{i, \ep}^*,&\\
& V_{\sst{\cal N}}(x)+\ep-V_{i, \ep}^*-(V_{\sst{\cal N}}(x)+\ep-\hat V_{i, \ep}^*)^+>\bar s\hat V_{i, \ep}^*-V_{i, \ep}^*=-Z.&
 \eess
After careful calculation, it derives that
\bess\begin{cases}
-{\cal L}_1 W=-\rho(x)W+\sigma_1(x)H_u(x)Z,\\
-{\cal L}_2 Z>-[\mu(x)(V_{\sst{\cal N}}(x)-\ep)+\sigma_2(x)\hat H_{i,\ep}^*]Z+\sigma_2(x)(V_{\sst{\cal N}}(x)+\ep-V_{i, \ep}^*)W.
	\end{cases}\eess
Notice $V_{\sst{\cal N}}(x)+\ep-V_{i, \ep}^*>0$ and $W, Z\ge 0$. It follows that $W,Z>0$ in $\bar\oo$ by the maximum principle. Then there exists $0<\tau<1-\bar s$ such that $(W, Z)\ge\tau(\hat H_{i, \ep}^*, \hat V_{i, \ep}^*)$, i.e., $(\bar s+\tau)(\hat H_{i, \ep}^*, \hat V_{i, \ep}^*)\le(H_{i, \ep}^*, V_{i, \ep}^*)$ in $\bar\oo$. This contradicts the definition of $\bar s$. Hence $\bar s=1$, i.e., $(\hat H_{i, \ep}^*, \hat V_{i, \ep}^*)\le(H_{i, \ep}^*, V_{i, \ep}^*)$ in $\bar\oo$. Certainly, $\hat V_{i, \ep}^*<V_{\sst{\cal N}}+\ep$, and $(V_{\sst{\cal N}}(x)+\ep-\hat V_{i, \ep}^*)^+=V_{\sst{\cal N}}(x)+\ep-\hat V_{i, \ep}^*$.

On the other hand, we can find $\zeta>1$ such that $\zeta(\hat H_{i,\ep}^*, \hat V_{i, \ep}^*)\ge(H_{i, \ep}^*, V_{i, \ep}^*)$ in $\oo$. Set
  \[\ud\zeta=\inf\{\zeta\ge1: \zeta(\hat H_{i,\ep}^*, \hat V_{i, \ep}^*)\ge(H_{i, \ep}^*, V_{i, \ep}^*)\;{\rm ~ in  ~ }\bar\oo\}.\]
Then $\ud\zeta$ is well defined, $\ud\zeta\ge1$ and $\ud\zeta(\hat H_{i,\ep}^*, \hat V_{i, \ep}^*)\ge(H_{i, \ep}^*, V_{i, \ep}^*)$ in $\oo$. If $\ud\zeta>1$, then $P:=\ud\zeta\hat H_{i,\ep}^*-H_{i, \ep}^*\ge 0$ and $Q:=\zeta\hat V_{i,\ep}^*-V_{i, \ep}^*\ge 0$, and $(P, Q)$ satisfies
 \bess\begin{cases}
-{\cal L}_1 P=-\rho(x)P+\sigma_1(x)H_u(x)Q,\\
-{\cal L}_2 Q>-[\mu(x)(V_{\sst{\cal N}}(x)-\ep)+\sigma_2(x) H_{i,\ep}^*]Q+\sigma_2(x)(V_{\sst{\cal N}}(x)+\ep-\hat V_{i, \ep}^*)P.
	\end{cases}\eess
As $V_{\sst{\cal N}}(x)+\ep-\hat V_{i, \ep}^*>0$ and $P, Q\ge 0$. Then $P,Q>0$ in $\bar\oo$, and there exists $0<r<\ud\zeta-1$ such that $(P, Q)\ge r(\hat H_{i,\ep}^*, \hat V_{i, \ep}^*)$, i.e., $(\ud\zeta-r)(\hat H_{i,\ep}^*, \hat V_{i, \ep}^*)\ge(H_{i, \ep}^*, V_{i, \ep}^*)$ in $\bar\oo$. This contradicts the definition of $\ud\zeta$. Hence $\ud\zeta=1$ and $(\hat H_{i,\ep}^*, \hat V_{i, \ep}^*)\ge(H_{i, \ep}^*, V_{i, \ep}^*)$. The uniqueness is proved.

Taking $\ep=0$ in the above, we obtain the desired conclusion.
\end{proof}\setcounter{equation}{2}

\begin{theo}\lbl{th3.2} Let $(H_i, V_u,V_i)$ be the unique positive solution of \qq{1.1}. Then the following hold.

{\rm(i)}\; If $\lm^{\sst{\cal N}}(V_{\sst{\cal N}})<0$, then
 \be
\lim_{t\to\yy}(H_i, V_u, V_i)=(H_i^*, V_{\sst{\cal N}}-V_i^*, V_i^*)=(H_i^*, V_u^*, V_i^*)\;\;\;
{\rm in}\;\; [C^2(\bar\oo)]^3.\vspace{-2mm}
 \lbl{3.3}\ee

{\rm(ii)}\; If $\lm^{\sst{\cal N}}(V_{\sst{\cal N}})\ge0$, then
 \be
\lim_{t\to\yy}(H_i, V_u, V_i)=(0, V_{\sst{\cal N}}, 0)\;\;\;
{\rm in}\;\; [C^2(\bar\oo)]^3.\vspace{-2mm}
 \lbl{3.4}\ee
  \end{theo}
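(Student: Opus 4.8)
The plan is to decouple \qq{1.1} through the total vector population and then to squeeze the infected components $(H_i,V_i)$ between the two families of perturbed problems $(3.1_{\pm\ep})$ furnished by Theorem \ref{th3.1}. First I would add the second and third equations of \qq{1.1}. Writing $V:=V_u+V_i$, the cross terms $\pm\sigma_2 V_uH_i$ cancel and $V$ solves the scalar logistic problem $\partial_t V-{\cal L}_2V=\beta(x)V-\mu(x)V^2$ with $\partial_\nu V=0$. Since $\beta>0$ gives $\lm^{\sst{\cal N}}(\beta)<0$, the analysis recalled in Section \ref{s2} shows this problem has the unique positive steady state $V_{\sst{\cal N}}$, which is globally asymptotically stable; as the vector population is initially non-trivial, $V(\cdot,t)\to V_{\sst{\cal N}}$ uniformly on $\bar\oo$ (and, by parabolic regularity, in $C^2(\bar\oo)$). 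In particular, for any admissible small $\ep>0$ there is $T_\ep>0$ with $V_{\sst{\cal N}}-\ep\le V(\cdot,t)\le V_{\sst{\cal N}}+\ep$ on $\bar\oo$ for all $t\ge T_\ep$. Once $V$ is controlled, $(H_i,V_i)$ solves the nonautonomous system obtained from \qq{1.1} by replacing $V_u$ with $V-V_i$.

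Next I would exploit the cooperative structure. Using $V_u=V-V_i\ge0$ together with $V_{\sst{\cal N}}-\ep\le V\le V_{\sst{\cal N}}+\ep$, one checks the pointwise bounds $\sigma_2(V-V_i)H_i\le\sigma_2(V_{\sst{\cal N}}+\ep-V_i)^+H_i$ and $-\mu VV_i\le-\mu(V_{\sst{\cal N}}-\ep)V_i$, while the reverse inequalities hold with $\ep$ replaced by $-\ep$. Consequently, for $t\ge T_\ep$ the pair $(H_i,V_i)$ is a subsolution of the parabolic flow associated with $(3.1_{\ep})$ and a supersolution of the one associated with $(3.1_{-\ep})$. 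Both flows are quasimonotone (the off-diagonal couplings $\sigma_1H_u$ and $\sigma_2(V_{\sst{\cal N}}\pm\ep-V_i)^+$ are nonnegative), so the comparison principle for cooperative systems applies. I would compare $(H_i(T_\ep),V_i(T_\ep))$ from above with the $(3.1_{\ep})$-flow started from a large supersolution of the form $(M,V_{\sst{\cal N}}+\ep)$ (the condition $\ep^2\mu<\beta V_{\sst{\cal N}}$ used in Theorem \ref{th3.1} is exactly what makes this a supersolution, and $M$ large absorbs the bounded datum $H_i(T_\ep)$), and from below with the $(3.1_{-\ep})$-flow started from $\delta(\phi_1,\phi_2)$, legitimate for small $\delta$ since $(H_i,V_i)$ is strictly positive on $\bar\oo$ for $t>0$ by the maximum principle.

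The decisive step is to show each bounding autonomous flow converges to the equilibrium determined by Theorem \ref{th3.1}. Because the flows are monotone, the orbit from the supersolution decreases and the orbit from the subsolution increases; by the uniqueness of the positive solution of $(3.1_{\pm\ep})$ both limits coincide with $(H_{i,\pm\ep}^*,V_{i,\pm\ep}^*)$ when $\lm^{\sst{\cal N}}(V_{\sst{\cal N}};\pm\ep)<0$, and with $(0,0)$ when that eigenvalue is nonnegative. This yields $(H_{i,-\ep}^*,V_{i,-\ep}^*)\le\liminf_{t\to\yy}(H_i,V_i)\le\limsup_{t\to\yy}(H_i,V_i)\le(H_{i,\ep}^*,V_{i,\ep}^*)$. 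Letting $\ep\to0$, a compactness argument based on elliptic estimates together with the uniqueness in Theorem \ref{th3.1} identifies the common limit: in case (i) ($\lm^{\sst{\cal N}}(V_{\sst{\cal N}})<0$) both $(H_{i,\pm\ep}^*,V_{i,\pm\ep}^*)\to(H_i^*,V_i^*)$, forcing $(H_i,V_i)\to(H_i^*,V_i^*)$ and hence $V_u=V-V_i\to V_{\sst{\cal N}}-V_i^*$; in case (ii) the upper bound alone suffices, since $\lm^{\sst{\cal N}}(V_{\sst{\cal N}})\ge0$ drives $(H_{i,\ep}^*,V_{i,\ep}^*)\to(0,0)$ as $\ep\to0^+$, whence $(H_i,V_i)\to(0,0)$ and $V_u\to V_{\sst{\cal N}}$.

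I expect two main obstacles. The first is justifying global convergence of the bounding flows to their unique positive equilibrium: one must combine monotonicity of cooperative systems with the uniqueness of Theorem \ref{th3.1}, while handling the truncation $(\cdot)^+$, which preserves quasimonotonicity but renders the nonlinearity only Lipschitz. The second is the passage $\ep\to0$, which relies on the continuous dependence of $(H_{i,\ep}^*,V_{i,\ep}^*)$ on $\ep$; the borderline subcase $\lm^{\sst{\cal N}}(V_{\sst{\cal N}})=0$ is the most delicate, for then $\lm^{\sst{\cal N}}(V_{\sst{\cal N}};\ep)<0$ for $\ep>0$ and one must verify that the positive equilibria of $(3.1_{\ep})$ genuinely collapse to $(0,0)$ as $\ep\to0^+$ rather than persisting. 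The final convergence in $[C^2(\bar\oo)]^3$ then follows from the uniform convergence by standard parabolic Schauder estimates.
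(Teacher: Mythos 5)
Your proposal is correct and follows essentially the same route as the paper's proof: decouple via the logistic equation for $V=V_u+V_i$, squeeze $(H_i,V_i)$ between monotone parabolic flows attached to $(3.1_{\pm\ep})$ started from upper/lower solutions, identify their limits through the uniqueness and nonexistence statements of Theorem \ref{th3.1}, and pass to the limit $\ep\to0$ by compactness, with the borderline case $\lm^{\sst{\cal N}}(V_{\sst{\cal N}})=0$ handled exactly as in the paper (the equilibria of $(3.1_\ep)$ collapse to $(0,0)$ because $(2.3_{\sst{\cal N}})$ has no positive solution). The only cosmetic difference is in the lower bound: you deactivate the truncation directly via $V_u=V-V_i\ge(V_{\sst{\cal N}}-\ep-V_i)^+$, whereas the paper first establishes the upper bound \qq{3.6} and then uses $V_i^*<V_{\sst{\cal N}}$ to guarantee $V_i<V_{\sst{\cal N}}-r$ before running the lower comparison; both are valid.
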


\begin{proof} (i)\, Let $(H_i, V_u, V_i)$ be the unique solution of \qq{1.1}, and set $V=V_u+V_i$. Then we have
 \bess
	\begin{cases}
\partial_t V-{\cal L}_2 V=\beta(x)V-\mu(x)V^2, &x\in\oo,\;t>0,\\
\partial_\nu V=0,&x\in\partial\oo,\;t>0,\\
V(x,0)=V_u(x,0)+V_i(x,0)>0,&x\in\bar\oo,
	\end{cases}
	\eess
and $\dd\lim_{t\to\yy}V(x,t)=V_{\sst{\cal N}}(x)$ in $C^2(\bar\oo)$. For any given $0<\ep\le\ep_0$, there exists $T_\ep\gg 1$ such that
 \bess
0<V_{\sst{\cal N}}(x)-\ep\le V(x,t)\le V_{\sst{\cal N}}(x)+\ep,\;\;\;\forall\;x\in\bar\oo, \; t\ge T_\ep.
 \eess
Hence, $(H_i, V_i)$ satisfies
 \bess\begin{cases}
\partial_t H_i-{\cal L}_1 H_i=-\rho(x)H_i+\sigma_1(x) H_u(x)V_i,\, &x\in\oo,\; t>T_\ep,\\
\partial_t V_i-{\cal L}_2 V_i\leq\sigma_2(x)(V_{\sst{\cal N}}(x)+\ep-V_i)^+H_i-\mu(x)(V_{\sst{\cal N}}(x)-\ep)V_i,\, &x\in\oo,\; t>T_\ep,\\
\partial_\nu H_i=\partial_\nu V_i=0,\;\;\;&x\in\partial\oo,\; t>T_\ep.
 \end{cases}\eess

Let $(H_{i, \ep}^*, V_{i, \ep}^*)$ be the unique positive solution of $(3.1_\ep)$, and $(\phi_1,\phi_2)$ be the positive eigenfunction corresponding to $\lm^{\sst{\cal N}}(V_{\sst{\cal N}}; \ep)$. We can take constants $k\gg 1$ and $0<\delta\ll 1$ such that
 \bess
 k(H_{i,\ep}^*(x), V_{i, \ep}^*(x))\ge(H_i(x, T_\ep), V_i(x, T_\ep)),\;\;\; \delta(\phi_1(x),\phi_2(x))\le(H_i(x, T_\ep), V_i(x, T_\ep)),\;\;\;x\in\bar\oo.
 \eess
Then $k(H_{i,\ep}^*, V_{i, \ep}^*)$ and $\delta(\phi_1,\phi_2)$ are the upper and lower solutions of $(3.1_\ep)$, respectively.
Let $(H_{i,\ep}, V_{i,\ep})$ be the unique positive solution of
 \bes\begin{cases}
\partial_t H_i-{\cal L}_1 H_i=-\rho(x) H_i+\sigma_1(x)H_u(x)V_i,\, &x\in\oo,\; t>0,\\
\partial_t V_i-{\cal L}_2 V_i=\sigma_2(x)(V_{\sst{\cal N}}(x)+\ep-V_i)^+ H_i-\mu(x)(V_{\sst{\cal N}}(x)-\ep)V_i,\, &x\in\oo,\; t>0,\\
\partial_\nu H_i=\partial_\nu V_i=0,\;\;\;&x\in\partial\oo,\; t>0,\\
(H_i(x, 0),\,V_i(x,0))=k(H_{i, \ep}^*,\,V_{i, \ep}^*),&x\in\bar\oo.
	\end{cases}\lbl{3.5}\ees
Then $(H_{i,\ep}(x,t), V_{i,\ep}(x,t))$ is decreasing in $t$, and
\bess
 \delta(\phi_1(x),\phi_2(x))\le (H_{i,\ep}(x,t), V_{i,\ep}(x,t)),\;\;(H_i(x,t+T_\ep), V_i(x,t+T_\ep))\le (H_{i,\ep}(x,t), V_{i,\ep}(x,t))
 \eess
by the comparison principle. Moreover, $\dd\lim_{t\to\yy}(H_{i,\ep}(x,t), V_{i,\ep}(x,t))=(H_{i, \ep}^*, V_{i, \ep}^*)$ in $[C^2(\bar\oo)]^2$
by the uniform estimate and compact arguments (cf. \cite[Theorems 2.11, 3.14]{Wpara}) and the uniqueness of positive solutions of $(3.1_\ep)$.  So $\dd\limsup_{t\to\yy}(H_i(x,t), V_i(x,t))\le (H_{i, \ep}^*, V_{i, \ep}^*)$ and, by letting $\ep\to 0$,
 \bes
\limsup_{t\to\yy}(H_i(x,t), V_i(x,t))\le (H_i^*, V_i^*)\;\;\;
{\rm uniformly\; in}\;\; \bar\oo.
  \lbl{3.6}\ees

On the other hand, as $V_i^*(x)<V_{\sst{\cal N}}(x)$ in $\bar\oo$, there exists $0<r_0\le\ep_0$ such that $V_i^*(x)<V_{\sst{\cal N}}(x)-2r$ in $\bar\oo$ for all $0<r<r_0$. For such a $r$, using \qq{3.6} and $\dd\lim_{t\to\yy}V(x,t)=V_{\sst{\cal N}}(x)$ in $C^2(\bar\oo)$, we can can find $T_r\gg 1$ such that
 \bess
V_i(x,t)\le V_i^*(x)+r<V_{\sst{\cal N}}(x)-r,\;\; 0<V_{\sst{\cal N}}(x)-r\le V(x,t)\le V_{\sst{\cal N}}(x)+r,\;\;\;\forall\;x\in\bar\oo, \; t\ge T_r.
 \eess
Consequently, $(H_i, V_i)$ satisfies
 \bess\begin{cases}
\partial_t H_i-{\cal L}_1 H_i=-\rho(x)H_i+\sigma_1(x) H_u(x)V_i,\, &x\in\oo,\; t>T_r,\\
\partial_t V_i-{\cal L}_2 V_i\geq\sigma_2(x)(V_{\sst{\cal N}}(x)-r-V_i)H_i-\mu(x)(V_{\sst{\cal N}}(x)+r)V_i\\
\hspace{20mm}=\sigma_2(x)(V_{\sst{\cal N}}(x)-r-V_i)^+H_i-\mu(x)(V_{\sst{\cal N}}(x)+r)V_i,\, &x\in\oo,\; t>T_r,\\
\partial_\nu H_i=\partial_\nu V_i=0,\;\;\;&x\in\partial\oo,\; t>T_r,\\
H_i(x, T_r)>0,\; 0<V_i(x, T_r)<V_{\sst{\cal N}}(x)-r,&x\in\bar\oo.
 \end{cases}\eess

Let $(\phi_1,\phi_2)$ be the positive eigenfunction corresponding to $\lm^{\sst{\cal N}}(V_{\sst{\cal N}}; -r)$. Similar to the above, $\delta(\phi_1,\phi_2)$ is a lower solution of $(3.1_{-r})$ and $\delta(\phi_1(x),\phi_2(x))\le (H_i(x, T_r), V_i(x, T_r))$ in $\bar\oo$ provided $0<\delta\ll 1$. Let $(H_{i,-r}, V_{i,-r})$ be the unique positive solution of \qq{3.5} with $\ep=-r$ and $(H_i(x, 0),\,V_i(x,0))=\delta(\phi_1(x),\phi_2(x))$. Then
 \[(H_i(x,t+T_r), V_i(x,t+T_r))\ge (H_{i,-r}(x,t),V_{i,-r}(x,t)),\;\; x\in\oo,\; t>0, \]
and $(H_{i,-r}, V_{i,-r})$ is increasing in $t$ by the comparison principle. Similar to the above,
 $$\dd\lim_{t\to\yy}(H_{i,-r}(x,t), V_{i,-r}(x,t))=(H_{i,-r}^*, V_{i, -r}^*)\;\;\;{\rm in }\;\; [C^2(\bar\oo)]^2,$$
where $(H_{i,-r}^*, V_{i, -r}^*)$ is the unique positive solution of $(3.1_{-r})$. Thus, $\dd\liminf_{t\to\yy}(H_i, V_i)\ge (H_{i,-r}^*, V_{i, -r}^*)$, and then $\dd\liminf_{t\to\yy}(H_i, V_i)\ge (H_i^*, V_i^*)$ by letting $r\to 0$. This combined with \qq{3.6} yields $\dd\lim_{t\to\yy}(H_i, V_i)=(H_i^*, V_i^*)$ uniformly in $\bar\oo$. Using the fact that $\dd\lim_{t\to\yy}(V_u+V_i)=V_{\sst{\cal N}}$ in $C^2(\bar\oo)$ and the uniform estimate (cf. \cite[Theorems 2.11, 3.14]{Wpara}), we see that \qq{3.3} holds.

(ii) Let $\lm^{\sst{\cal N}}(V_{\sst{\cal N}}; \ep)$ be the principal eigenvalue of $(3.2_\ep)$. If $\lm^{\sst{\cal N}}(V_{\sst{\cal N}})>0$, then $\lm^{\sst{\cal N}}(V_{\sst{\cal N}}; \ep)>0$ when $0<\ep\ll 1$. So $(3.1_\ep)$ has no positive solution. Let $(H_{i,\ep}, V_{i,\ep})$ be the unique positive solution of \qq{3.5} with initial data $(C, K)$, where $C$ and $K$ are suitably large positive constants, for example,
  \bess
 K>\max_{\bar\oo}\max\{V_{\sst{\cal N}}(x)+\ep,\,V_i(x,T_\ep)\}, \;\;C>\max_{\bar\oo}\max\{K\sigma_1(x)H_u(x)/\rho(x),\, H_i(x,T_\ep)\}.\eess
Then $(C, K)$ is an upper solution of $(3.1_\ep)$, and $(H_{i,\ep}, V_{i,\ep})$ is decreasing in $t$. Similar to the above,  $\dd\lim_{t\to\yy}(H_{i,\ep}, V_{i,\ep})=(0,0)$ since $(3.1_\ep)$ has no positive solution. Then, by the comparison principle, $\dd\lim_{t\to\yy}(H_i, V_i)=(0,0)$. This together with $\dd\lim_{t\to\yy}(V_u+V_i)=V_{\sst{\cal N}}$ gives the limit \qq{3.4}.\vskip 4pt

If $\lm^{\sst{\cal N}}(V_{\sst{\cal N}})=0$, then $\lm^{\sst{\cal N}}(V_{\sst{\cal N}}; \ep)<0$ and $(3.1_{\ep})$ has a unique positive solution $(H_{i,\ep}^*, V_{i,\ep}^*)$ for any $\ep>0$. Obviously, $\dd\lim_{\ep\to 0}(H_{i,\ep}^*, V_{i,\ep}^*)=(0, 0)$ in $[C^2(\bar\oo)]^2$ since $(3.1_0)$, i.e., $(2.3_{\sst{\cal N}})$ has no positive solution in the present case. Similar to the above, $\dd\lim_{t\to\yy}(H_i, V_i)=(0,0)$ and \qq{3.4} holds. The proof is complete. \end{proof}

\section{Dynamical properties of \qq{1.2}}\setcounter{equation}{0}

In the present situation we write the boundary operator ${\cal B}$ as ${\cal D}$ to represent the Dirichlet boundary condition and $V_{\sst{\cal B}}=V_{\sst{\cal D}}$. The results and arguments are similar to that in Section \ref{s3}.

\begin{theo}\lbl{th4.1} Assume $\lm^{\sst{\cal D}}(\beta)<0$. Then the  problem $(2.3_{\sst{\cal D}})$ has a positive solution $(H_i^*, V_i^*)$ if and only if $\lm^{\sst{\cal D}}(V_{\sst{\cal D}})<0$. Moreover, $(H_i^*, V_i^*)$ is unique and satisfies $V_i^*<V_{\sst{\cal D}}$ when it exists. Therefore, $(2.1_{\sst{\cal D}})$ has a positive solution $(H_i^*, V_u^*, V_i^*)$ if and only if $\lm^{\sst{\cal D}}(V_{\sst{\cal D}})<0$, and $(H_i^*, V_u^*, V_i^*)$ is unique and takes the form $(H_i^*, V_{\sst{\cal D}}-V_i^*, V_i^*)$  when it exists.
\end{theo}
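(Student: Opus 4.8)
The plan is to mirror the proof of Theorem \ref{th3.1}, but since $V_{\sst{\cal D}}$ vanishes on $\partial\oo$ the additive perturbation $V_{\sst{\cal N}}\pm\ep$ used there is no longer admissible (indeed $V_{\sst{\cal D}}-\ep<0$ near the boundary). Because Theorem \ref{th4.1} concerns only the stationary problem, I would avoid the perturbed problem altogether and argue directly on $(2.3_{\sst{\cal D}})$ at ``$\ep=0$''. The necessity part is free: Theorem \ref{th2.1} is stated for a general boundary operator ${\cal B}$, so if $(2.3_{\sst{\cal D}})$ admits a positive solution then $\lm^{\sst{\cal D}}(V_{\sst{\cal D}})<0$. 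It remains to show that $\lm^{\sst{\cal D}}(V_{\sst{\cal D}})<0$ forces a unique positive solution with $V_i^*<V_{\sst{\cal D}}$; the translation to $(2.1_{\sst{\cal D}})$ via $V_u^*=V_{\sst{\cal D}}-V_i^*$ is then immediate from Section \ref{s2}.

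For existence I would use the coupled upper--lower solution method. On the order interval $\{0\le V_i\le V_{\sst{\cal D}}\}$ the system $(2.3_{\sst{\cal D}})$ is cooperative, since $\partial_{H_i}[\sigma_2(V_{\sst{\cal D}}-V_i)H_i]=\sigma_2(V_{\sst{\cal D}}-V_i)\ge0$ there, so no $(\cdot)^+$ truncation is needed. Take as upper solution $(\bar H_i^*,V_{\sst{\cal D}})$, where $\bar H_i^*$ solves the linear Dirichlet problem $-{\cal L}_1\bar H_i^*+\rho\bar H_i^*=\sigma_1 H_u V_{\sst{\cal D}}$; then the first inequality holds with equality and the second reduces to $\beta V_{\sst{\cal D}}\ge0$ by $(2.2_{\sst{\cal D}})$. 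As lower solution take $\delta(\phi_1,\phi_2)$ with $(\phi_1,\phi_2)$ the positive principal eigenfunction of $(2.4_{\sst{\cal D}})$; since $\lm^{\sst{\cal D}}(V_{\sst{\cal D}})<0$, the required differential inequalities hold for $\delta$ small, and $\delta(\phi_1,\phi_2)\le(\bar H_i^*,V_{\sst{\cal D}})$ holds for small $\delta$ because all four functions vanish on $\partial\oo$ with comparable (Hopf) normal derivatives. This yields a positive solution $(H_i^*,V_i^*)$; the strict bound $V_i^*<V_{\sst{\cal D}}$ in $\oo$ then follows by applying the strong maximum principle to $w:=V_{\sst{\cal D}}-V_i^*\ge0$, which satisfies $-{\cal L}_2 w+(\mu V_{\sst{\cal D}}+\sigma_2 H_i^*)w=\beta V_{\sst{\cal D}}>0$ in $\oo$.

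For uniqueness I would run the two sliding arguments of Theorem \ref{th3.1}. Given a second positive solution $(\hat H_i^*,\hat V_i^*)$, set $\bar s=\sup\{s\in(0,1]:s(\hat H_i^*,\hat V_i^*)\le(H_i^*,V_i^*)\}$ and suppose $\bar s<1$. Writing $W=H_i^*-\bar s\hat H_i^*$, $Z=V_i^*-\bar s\hat V_i^*$ and expanding the quadratic term, one obtains a strictly positive surplus $\sigma_2\bar s(1-\bar s)\hat H_i^*\hat V_i^*>0$ in $\oo$, giving $-{\cal L}_2 Z+(\sigma_2 H_i^*+\mu V_{\sst{\cal D}})Z>\sigma_2(V_{\sst{\cal D}}-\bar s\hat V_i^*)W\ge0$ together with $-{\cal L}_1 W+\rho W=\sigma_1 H_u Z$, whence $W,Z>0$ in $\oo$ by the maximum principle. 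The main obstacle specific to the Dirichlet case appears here: unlike the Neumann setting, $W,Z,\hat H_i^*,\hat V_i^*$ all vanish on $\partial\oo$, so one cannot directly extract $\tau>0$ with $(W,Z)\ge\tau(\hat H_i^*,\hat V_i^*)$. I would overcome this with Hopf's boundary lemma: each of $W,\hat H_i^*$ (resp.\ $Z,\hat V_i^*$) is positive in $\oo$, vanishes on $\partial\oo$, and has strictly negative outward normal derivative, so the ratios extend positively to $\partial\oo$ and $\inf_\oo(W/\hat H_i^*)>0$, $\inf_\oo(Z/\hat V_i^*)>0$. This furnishes the desired $\tau$, contradicting the definition of $\bar s$; hence $\bar s=1$ and $(\hat H_i^*,\hat V_i^*)\le(H_i^*,V_i^*)$. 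The symmetric argument with $\ud\zeta=\inf\{\zeta\ge1:\zeta(\hat H_i^*,\hat V_i^*)\ge(H_i^*,V_i^*)\}$ gives the reverse inequality, so the two solutions coincide. Finally $V_u^*=V_{\sst{\cal D}}-V_i^*>0$ in $\oo$ completes the corresponding positive solution of $(2.1_{\sst{\cal D}})$.
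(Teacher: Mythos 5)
Your proposal is correct, and its skeleton --- necessity from Theorem \ref{th2.1}, existence via the upper--lower solution pair $(\bar H_i^*,V_{\sst{\cal D}})$ and $\delta(\phi_1,\phi_2)$, uniqueness via the two sliding arguments --- is the same as the paper's; the genuine difference lies in how the vanishing of $V_{\sst{\cal D}}$ on $\partial\oo$ is handled. You correctly diagnose that the Neumann-style perturbation $V_{\sst{\cal D}}\pm\ep$ is inadmissible and respond by discarding the perturbation altogether, working directly with $(2.3_{\sst{\cal D}})$. This is legitimate for Theorem \ref{th4.1} and is in fact slightly cleaner: on the order interval $0\le V_i\le V_{\sst{\cal D}}$ the system is cooperative without the $(\cdot)^+$ truncation, and your algebra (the surplus $\sigma_2\bar s(1-\bar s)\hat H_i^*\hat V_i^*$ in the sliding step, and the identity $-{\cal L}_2 w+(\mu V_{\sst{\cal D}}+\sigma_2 H_i^*)w=\beta V_{\sst{\cal D}}$ for $w=V_{\sst{\cal D}}-V_i^*$) checks out. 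The paper instead keeps an $\ep$-family but reshapes the perturbation as $\pm\ep\varphi(x)$, with $\varphi$ the principal eigenfunction of $\lm^{\sst{\cal D}}(\beta)$, which vanishes on $\partial\oo$ at the same rate as $V_{\sst{\cal D}}$ (so $V_{\sst{\cal D}}\pm\ep\varphi>0$ in $\oo$ by a Hopf-type comparison, cf. \cite[Lemma 2.1]{WPE24}), and proves existence and uniqueness for the whole family $(4.1_\ep)$, $|\ep|\le\ep_0$, recovering Theorem \ref{th4.1} at $\ep=0$. That extra generality is not wasted: it is exactly the infrastructure consumed in the proof of Theorem 4.2(i), where $V=V_u+V_i$ is squeezed between $V_{\sst{\cal D}}\pm\ep\varphi$ and the unique positive solutions of $(4.1_{\pm\ep})$ serve as attracting upper and lower bounds; with only your version of the theorem, that perturbed result would still have to be proved there. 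Finally, you make explicit a point the paper passes over quickly: in the Dirichlet setting the constant $\tau$ (the paper's $r$) with $(W,Z)\ge\tau(\hat H_i^*,\hat V_i^*)$ cannot be read off from interior positivity alone, since $W,Z,\hat H_i^*,\hat V_i^*$ all vanish on $\partial\oo$; your Hopf-boundary-lemma ratio argument is exactly the needed justification (the paper delegates such comparisons to \cite[Lemma 2.1]{WPE24}), and note that the same argument is also required at the very start of the sliding, to produce the initial $s$ and $\zeta$.
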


\begin{proof} By Theorem \ref{th2.1}, it suffices to prove that if $\lm^{\sst{\cal D}}(V_{\sst{\cal D}})<0$, then $(2.3_{\sst{\cal D}})$ has a unique positive solution $(H_i^*, V_i^*)$ and $V_i^*<V_{\sst{\cal D}}$. To this aim, we prove the following general conclusion.\vspace{-2mm}
\begin{enumerate}[leftmargin=6mm]
\item[$\bullet$] Let $\varphi(x)$, with $\max_{\bar\oo}\|\varphi\|=1$, be the positive eigenfunction corresponding to $\lm^{\sst{\cal D}}(\beta)$. Then there is $0<\ep_0\ll 1$ such that, when $|\ep|\le\ep_0$, the problem\vspace{-2mm}
 \hspace{-3mm}$$\begin{cases}
-{\cal L}_1 H_i^*=-\rho(x)H_i^*+\sigma_1(x)H_u(x) V_i^*,\, &x\in\oo,\\
-{\cal L}_2 V_i^*=\sigma_2(x)(V_{\sst{\cal D}}(x)+\ep\varphi(x)-V_i^*)^+ H_i^*-\mu(x)(V_{\sst{\cal D}}(x)-\ep\varphi(x))V_i^*,\, &x\in\oo,\\
  H_i^*= V_i^*=0,\;\;\;&x\in\partial\oo\vspace{-1mm}
	\end{cases}\eqno(4.1_\ep)$$
has a unique positive solution $(H_{i, \ep}^*, V_{i, \ep}^*)$, and $V_{i, \ep}^*(x)<V_{\sst{\cal D}}(x)+\ep\varphi(x)$. \end{enumerate}

{\it Existence of positive solution of $(4.1_\ep)$}. Let $\lm^{\sst{\cal D}}(V_{\sst{\cal D}}; \ep)$ be the principal eigenvalue of
 $$\begin{cases}
-{\cal L}_1\phi_1+\rho(x)\phi_1-\sigma_1(x)H_u(x)\phi_2
 =\lm\phi_1,\, &x\in\oo,\\
-{\cal L}_2\phi_2-\sigma_2(x)(V_{\sst{\cal D}}(x)+\ep\varphi(x))\phi_1
+\mu(x)(V_{\sst{\cal D}}(x)-\ep\varphi(x))\phi_2=\lm\phi_2,\, &x\in\oo,\\
 \phi_1=\phi_2=0,&x\in\partial\oo.
	\end{cases}$$
As $\lm^{\sst{\cal D}}(V_{\sst{\cal D}})<0$, there is $0<\ep_0\ll 1$ such that, when $|\ep|\le\ep_0$, we have $\lm^{\sst{\cal D}}(V_{\sst{\cal D}}; \ep)<0$, and (cf. \cite[Lemma 2.1]{WPE24})
 $$V_{\sst{\cal D}}(x)\pm\ep\varphi(x)>0, \;\;\;(\ep\varphi(x))^2\mu(x)
 <\beta(x)V_{\sst{\cal D}}(x)+\ep\varphi(x)(\lm^{\sst{\cal D}}(V_{\sst{\cal D}})+\beta(x))\;\;\;
 {\rm in }\;\; \oo.$$
Let $\bar H_i^*$ be the unique positive solution of the linear problem
 \bess\begin{cases}
-{\cal L}_1\bar H_i^*+\rho(x)\bar H_i^*=\sigma_1(x)H_u(x)(V_{\sst{\cal D}}(x)+\ep\varphi(x)),\, &x\in\oo,\\
 \bar H_i^*=0,&x\in\partial\oo.
	\end{cases} \eess
Then $(\bar H_i^*,V_{\sst{\cal D}}+\ep\varphi(x))$ is a strict upper solution of $(4.1_\ep)$. Let $(\phi_1,\phi_2)$ be the positive eigenfunction corresponding to $\lm^{\sst{\cal D}}(V_{\sst{\cal D}}; \ep)$. It is easy to verify that $\delta(\phi_1, \phi_2)$ is a lower solution of $(4.1_\ep)$ and $\delta(\phi_1, \phi_2)\le(\bar H_i^*,V_{\sst{\cal D}}+\ep\varphi(x))$ provided $\delta>0$ is suitably small. By the upper and lower solutions method, $(4.1_\ep)$ has at least one positive solution $(H_{i, \ep}^*, V_{i, \ep}^*)$, and $V_{i, \ep}^*<V_{\sst{\cal D}}+\ep\varphi(x)$, $H_{i, \ep}^*<\bar H_i^*$.\vskip 2pt

{\it Uniqueness of positive solutions of $(4.1_\ep)$}. Let $(\hat H_{i, \ep}^*, \hat V_{i, \ep}^*)$ be another positive solution of $(4.1_\ep)$. We can find $0<s<1$ such that $s(\hat H_{i,\ep}^*, \hat V_{i, \ep}^*)\le(H_{i, \ep}^*, V_{i, \ep}^*)$ in $\oo$. Set
  \[\bar s=\sup\{0<s\le 1: s(\hat H_{i, \ep}^*, \hat V_{i, \ep}^*)\le(H_{i, \ep}^*, V_{i, \ep}^*) \;{\rm ~ in  ~ }\oo\}.\]
Then $\bar s$ is well defined, $0<\bar s\le1$ and $\bar s(\hat H_{i, \ep}^*, \hat V_{i, \ep}^*)\le(H_{i, \ep}^*, V_{i, \ep}^*)$ in $\oo$. We shall prove $\bar s=1$. If $\bar s<1$, then $W:=H_{i,\ep}^*-\bar s\hat H_{i,\ep}^*\ge 0, Z:=V_{i,\ep}^*-\bar s\hat V_{i,\ep}^*\ge 0$.
Since $\bar s\hat V_{i, \ep}^*\le V_{i, \ep}^*<V_{\sst{\cal D}}+\ep\varphi(x)$ and $\bar s<1$, we have
 \bess
 &(V_{\sst{\cal D}}+\ep\varphi(x)-V_{i, \ep}^*)^+=V_{\sst{\cal D}}+\ep\varphi(x)-V_{i, \ep}^*,\;\;\;
 (V_{\sst{\cal D}}+\ep\varphi(x)-\hat V_{i, \ep}^*)^+<V_{\sst{\cal D}}+\ep\varphi(x)-\bar s\hat V_{i, \ep}^*,&\\
& V_{\sst{\cal D}}(x)+\ep\varphi(x)-V_{i, \ep}^*-(V_{\sst{\cal D}}(x)+\ep\varphi(x)-\hat V_{i, \ep}^*)^+>\bar s\hat V_{i, \ep}^*-V_{i, \ep}^*=-Z.&
 \eess
By the careful calculations,
\bess\begin{cases}
-{\cal L}_1 W=-\rho(x)W+\sigma_1(x)H_u(x)Z,\\
-{\cal L}_2 Z>-[\mu(x)(V_{\sst{\cal D}}(x)-\ep\varphi(x))+\sigma_2(x)\hat H_{i,\ep}^*]Z+\sigma_2(x)(V_{\sst{\cal D}}(x)+\ep\varphi(x)-V_{i, \ep}^*)W.
	\end{cases}\eess
Notice $V_{\sst{\cal D}}(x)+\ep\varphi(x)-V_{i, \ep}^*>0$ and $W, Z\ge 0$. It follows that $W,Z>0$ in $\oo$ by the maximum principle. Then there exists $0<r<1-\bar s$ such that $(W, Z)\ge r(\hat H_{i, \ep}^*, \hat V_{i, \ep}^*)$, i.e., $(\bar s+r)(\hat H_{i, \ep}^*, \hat V_{i, \ep}^*)\le(H_{i, \ep}^*, V_{i, \ep}^*)$ in $\oo$. This contradicts the definition of $\bar s$. Hence $\bar s=1$, i.e., $(\hat H_{i, \ep}^*, \hat V_{i, \ep}^*)\le(H_{i, \ep}^*, V_{i, \ep}^*)$ in $\oo$. Certainly, $\hat V_{i, \ep}^*<V_{\sst{\cal D}}+\ep\varphi(x)$, and $(V_{\sst{\cal D}}(x)+\ep\varphi(x)-\hat V_{i, \ep}^*)^+=V_{\sst{\cal D}}(x)+\ep\varphi(x)-\hat V_{i, \ep}^*$.

Same as the proof of Theorem \ref{th3.1} we can prove $(\hat H_{i,\ep}^*, \hat V_{i, \ep}^*)\ge(H_{i, \ep}^*, V_{i, \ep}^*)$.

Take $\ep=0$ in the above, we obtain the desired conclusion.
\end{proof}\setcounter{equation}{1}

\begin{theo} Let $(H_i, V_u,V_i)$ be the unique positive solution of \qq{1.2}. Then the following holds.

{\rm(i)}\; If $\lm^{\sst{\cal D}}(\beta)<0$ and $\lm^{\sst{\cal D}}(V_{\sst{\cal D}})<0$, then $\dd\lim_{t\to\yy}(H_i, V_u, V_i)=(H_i^*, V_{\sst{\cal D}}-V_i^*, V_i^*)=(H_i^*, V_u^*, V_i^*)$ in $[C^2(\bar\oo)]^3$.

{\rm(ii)}\; If  $\lm^{\sst{\cal D}}(\beta)<0$ and $\lm^{\sst{\cal D}}(V_{\sst{\cal D}})\ge0$, then
$\dd\lim_{t\to\yy}(H_i, V_u, V_i)=(0, V_{\sst{\cal D}}, 0)$ in $[C^2(\bar\oo)]^3$.

{\rm(iii)}\; If $\lm^{\sst{\cal D}}(\beta)\ge 0$, then $\dd\lim_{t\to\yy}(H_i, V_u, V_i)=(0, 0, 0)$ in $[C^2(\bar\oo)]^3$.
   \end{theo}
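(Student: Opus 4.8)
The plan is to mirror the proof of Theorem \ref{th3.2}, the only structural change being that the constant envelope $\pm\ep$ of the Neumann argument must be replaced throughout by the eigenfunction envelope $\pm\ep\varphi$, so as to respect the Dirichlet boundary behaviour. First I set $V=V_u+V_i$; adding the second and third equations of \qq{1.2} shows that $V$ solves the scalar logistic problem $\partial_t V-{\cal L}_2 V=\beta(x)V-\mu(x)V^2$ with homogeneous Dirichlet data and positive initial value. Its long-time behaviour is governed by the sign of $\lm^{\sst{\cal D}}(\beta)$: when $\lm^{\sst{\cal D}}(\beta)\ge0$ one has $V(\cdot,t)\to0$, while when $\lm^{\sst{\cal D}}(\beta)<0$ one has $V(\cdot,t)\to V_{\sst{\cal D}}$ in $C^2(\bar\oo)$, $V_{\sst{\cal D}}$ being the unique positive solution of $(2.2_{\sst{\cal D}})$. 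This immediately disposes of part (iii): if $\lm^{\sst{\cal D}}(\beta)\ge0$ then $V\to0$, hence $V_u,V_i\to0$ by nonnegativity, and the $H_i$ equation, whose source $\sigma_1H_uV_i$ then tends to $0$, forces $H_i\to0$; the $C^2$ convergence follows from parabolic regularity (cf. \cite[Theorems 2.11, 3.14]{Wpara}).

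For part (i) I assume $\lm^{\sst{\cal D}}(\beta)<0$ and $\lm^{\sst{\cal D}}(V_{\sst{\cal D}})<0$, so $V\to V_{\sst{\cal D}}$ and, by Theorem \ref{th4.1}, the stationary system $(2.3_{\sst{\cal D}})$ has the unique positive solution $(H_i^*,V_i^*)$ with $V_i^*<V_{\sst{\cal D}}$. Fixing small $\ep>0$, the convergence of $V$ together with the boundary estimate of \cite[Lemma 2.1]{WPE24} yields $V_{\sst{\cal D}}-\ep\varphi\le V(\cdot,t)\le V_{\sst{\cal D}}+\ep\varphi$ for $t\ge T_\ep$. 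Since $V_i\le V\le V_{\sst{\cal D}}+\ep\varphi$, the equation for $V_i$ then satisfies the differential inequality of $(4.1_\ep)$ from above; comparing with the monotone auxiliary parabolic flow (the Dirichlet analogue of \qq{3.5}) started from a large multiple $k(H_{i,\ep}^*,V_{i,\ep}^*)$, which serves as a super-solution and decreases to the unique solution $(H_{i,\ep}^*,V_{i,\ep}^*)$ of $(4.1_\ep)$, gives $\limsup_{t\to\yy}(H_i,V_i)\le(H_{i,\ep}^*,V_{i,\ep}^*)$, and letting $\ep\to0$ gives $\limsup(H_i,V_i)\le(H_i^*,V_i^*)$. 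For the matching lower bound I choose $r>0$ small; because $V_i^*<V_{\sst{\cal D}}$ with both vanishing on $\partial\oo$, the Hopf lemma gives $V_i^*<V_{\sst{\cal D}}-2r\varphi$, and combining the $\limsup$ estimate with $V\ge V_{\sst{\cal D}}-r\varphi$ for large $t$ places the equation for $V_i$ below the inequality of $(4.1_{-r})$. A monotone increasing auxiliary flow then yields $\liminf(H_i,V_i)\ge(H_{i,-r}^*,V_{i,-r}^*)$, and $r\to0$ gives $\liminf\ge(H_i^*,V_i^*)$. Hence $(H_i,V_i)\to(H_i^*,V_i^*)$, and $V_u=V-V_i\to V_{\sst{\cal D}}-V_i^*$, which is the claim.

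For part (ii), $\lm^{\sst{\cal D}}(\beta)<0$ still gives $V\to V_{\sst{\cal D}}$, while $\lm^{\sst{\cal D}}(V_{\sst{\cal D}})\ge0$ means, by Theorems \ref{th2.1} and \ref{th4.1}, that $(2.3_{\sst{\cal D}})$ has no positive solution. I split into two cases exactly as in Theorem \ref{th3.2}(ii). If $\lm^{\sst{\cal D}}(V_{\sst{\cal D}})>0$, then the corresponding perturbed principal eigenvalue $\lm^{\sst{\cal D}}(V_{\sst{\cal D}};\ep)>0$ for small $\ep>0$, so $(4.1_\ep)$ has no positive solution; a large constant super-solution $(C,K)$ produces a decreasing auxiliary flow converging to $(0,0)$, whence $(H_i,V_i)\to(0,0)$ by comparison. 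If $\lm^{\sst{\cal D}}(V_{\sst{\cal D}})=0$, then $(4.1_\ep)$ has a positive solution $(H_{i,\ep}^*,V_{i,\ep}^*)$ for each $\ep>0$ with $(H_{i,\ep}^*,V_{i,\ep}^*)\to(0,0)$ as $\ep\to0$ (since $(4.1_0)=(2.3_{\sst{\cal D}})$ has none), and the same squeeze gives $(H_i,V_i)\to(0,0)$. In both cases $V_u=V-V_i\to V_{\sst{\cal D}}$.

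The step I expect to require the most care is the boundary-layer control in the envelope inequality $V_{\sst{\cal D}}-\ep\varphi\le V(\cdot,t)\le V_{\sst{\cal D}}+\ep\varphi$ and, likewise, the strict ordering $V_i^*<V_{\sst{\cal D}}-2r\varphi$. Unlike the Neumann case, where $V_{\sst{\cal N}}$ is bounded away from zero and a constant gap $\ep$ suffices, here $V_{\sst{\cal D}}$, $V_i^*$ and $\varphi$ all vanish on $\partial\oo$, so uniform $C^2$ convergence of $V$ to $V_{\sst{\cal D}}$ alone does not place the deviation inside an $\ep\varphi$ tube near the boundary. One must instead invoke the Hopf-type estimate of \cite[Lemma 2.1]{WPE24}, comparing the nonvanishing normal derivatives, which is precisely what guarantees both the envelope and the strict ordering. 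Once these estimates are secured, the monotone-iteration and comparison machinery is identical to that of Section \ref{s3}.
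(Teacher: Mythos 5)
Your overall architecture --- the logistic limit for $V=V_u+V_i$, the $\ep\varphi$-envelope \qq{4.2} via the Hopf-type estimate of \cite[Lemma 2.1]{WPE24}, the squeeze between monotone auxiliary flows for $(4.1_{\pm\ep})$, and the splitting of case (ii) into $\lm^{\sst{\cal D}}(V_{\sst{\cal D}})>0$ and $\lm^{\sst{\cal D}}(V_{\sst{\cal D}})=0$ --- is exactly the paper's, and parts (ii), (iii) and the $\limsup$ half of (i) are correct as you describe them. The gap is in the $\liminf$ half of (i). You write that ``combining the $\limsup$ estimate with $V\ge V_{\sst{\cal D}}-r\varphi$ \dots places the equation for $V_i$ below the inequality of $(4.1_{-r})$''; this is the Neumann step $V_i\le V_i^*+r<V_{\sst{\cal N}}-r$ transplanted verbatim, and it fails under the Dirichlet condition. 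The $\limsup$ estimate is only uniform: it gives $V_i(\cdot,t)\le V_i^*+\eta$ for each fixed $\eta>0$ and large $t$. To conclude $V_i(\cdot,t)<V_{\sst{\cal D}}-r\varphi$ you would need $\eta\le V_{\sst{\cal D}}-r\varphi-V_i^*$; although $V_i^*<V_{\sst{\cal D}}-2r\varphi$ gives $V_{\sst{\cal D}}-r\varphi-V_i^*>r\varphi$, this lower bound degenerates to $0$ on $\partial\oo$, so no uniform $\eta$ works. In other words, precisely the boundary-layer obstruction you correctly identify for \qq{4.2} and for the ordering $V_i^*<V_{\sst{\cal D}}-2r\varphi$ arises a third time, for the time-dependent solution $V_i$ itself, and your closing paragraph does not cover this occurrence.

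The paper closes this gap with an extra chain of weighted estimates that your proposal lacks: at this point it does not invoke the $\limsup$ estimate at all, but instead uses the pointwise domination $V_i(\cdot,t)\le V_{i,\bar\ep}(\cdot,t-T_{\bar\ep})$ by the decreasing auxiliary flow, together with the $C^2$-convergences $V_{i,\ep}^*\to V_i^*$ (as $\ep\to0$) and $V_{i,\bar\ep}(\cdot,t)\to V_{i,\bar\ep}^*$ (as $t\to\yy$), to which the Hopf-type argument behind \qq{4.2} is applied twice more, giving first $V_{i,\bar\ep}^*<V_i^*+\frac r2\varphi$ for a suitable $\bar\ep$ and then $V_{i,\bar\ep}(\cdot,t)<V_{i,\bar\ep}^*+\frac r2\varphi$ for large $t$; chaining yields $V_i<V_i^*+r\varphi<V_{\sst{\cal D}}-r\varphi$ in $\oo$. (Alternatively, one can avoid any weighted bound on $V_i$ by observing that $V_u\ge0$ and $V_u=V-V_i\ge V_{\sst{\cal D}}-r\varphi-V_i$ already imply $V_u\ge(V_{\sst{\cal D}}-r\varphi-V_i)^+$, so the truncated differential inequality needed to compare $(H_i,V_i)$ with the increasing flow for $(4.1_{-r})$ follows from the envelope on $V$ alone; but your proposal as written contains neither this observation nor the paper's chain, so the $\liminf$ step stands unjustified.)
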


\begin{proof} (i)\, Let $(H_i, V_u, V_i)$ be the unique solution of \qq{1.2}, and set $V=V_u+V_i$. Then we have
 \bess
	\begin{cases}
\partial_t V-{\cal L}_2 V=\beta(x)V-\mu(x)V^2, &x\in\oo,\;t>0,\\
 V=0,&x\in\partial\oo,\;t>0,\\
V(x,0)=V_u(x,0)+V_i(x,0)>0,&x\in\bar\oo,
	\end{cases}
	\eess
and $\dd\lim_{t\to\yy}V(x,t)=V_{\sst{\cal D}}(x)$ in $C^2(\bar\oo)$. Let $\varphi(x)$, with $\max_{\bar\oo}\|\varphi\|=1$, be the positive eigenfunction corresponding to $\lm^{\sst{\cal D}}(\beta)$. We first prove that for any given $0<\ep\le\ep_0$, there exists $T_\ep\gg 1$ such that
 \bes
0<V_{\sst{\cal D}}(x)-\ep\varphi(x)<V(x,t)<V_{\sst{\cal D}}(x)+\ep\varphi(x),\;\;\;
\forall\;x\in\oo,\; t\ge T_\ep.
 \label{4.2}\ees
The idea of the proof of \qq{4.2} comes from that of \cite[Lemma 2.1]{WPE24}. In fact, set $W(x,t)=V(x,t)-V_{\sst{\cal D}}(x)$ and $Z(x)=\ep\varphi(x)$. Then $W(x,t)\to 0$ in $C^1(\bar\oo)$ as $t\to\yy$. Thanks to $Z\in C^1(\bar\oo)$ and  $\partial_\nu Z\big|_{\partial\oo}<0$, there exist $\tau>0$ and $T_1\gg 1$ such that $\partial_\nu(W(x,t)-Z(x))\big|_{\partial\oo}\ge\tau$ for all $t\ge T_1$. This together with $(W(x,t)-Z(x))\big|_{\partial\oo}=0$  asserts that there exists $\Omega_0\Subset\Omega$ such that $W(x,t)-Z(x)<0$ in $\Omega\setminus\Omega_0$. As $Z>0$ in $\bar\Omega_0$ and $W(x,t)\to 0$ in $C(\bar\Omega_0)$ as $t\to\yy$, we can find
$T_2\gg 1$ such that $W(x,t)-Z(x)<0$ in $\bar\Omega_0$. Thus, $W(x,t)-Z(x)<0$, i.e., $V(x,t)<V_{\sst{\cal D}}(x)+\ep\varphi(x)$ in $\oo$ for $t\ge\max\{T_1, T_2\}$. Similarly, we can prove $V_{\sst{\cal D}}(x)-\ep\varphi(x)<V(x,t)$ in $\oo$ when $t$ is large.

Making use of \qq{4.2}, we see that $(H_i, V_i)$ satisfies
 \bess\begin{cases}
\partial_t H_i-{\cal L}_1 H_i=-\rho(x)H_i+\sigma_1(x) H_u(x)V_i,\, &x\in\oo,\; t>T_\ep,\\
\partial_t V_i-{\cal L}_2 V_i\leq\sigma_2(x)(V_{\sst{\cal D}}(x)+\ep\varphi(x)-V_i)^+H_i-\mu(x)(V_{\sst{\cal D}}(x)-\ep\varphi(x))V_i,\, &x\in\oo,\; t>T_\ep,\\
 H_i= V_i=0,\;\;\;&x\in\partial\oo,\; t>T_\ep.
 \end{cases}\eess

Let $(H_{i, \ep}^*, V_{i, \ep}^*)$ be the unique positive solution of $(4.1_\ep)$, and $(\phi_1,\phi_2)$ be the positive eigenfunction corresponding to $\lm^{\sst{\cal D}}(V_{\sst{\cal D}}; \ep)$. According to \cite[Lemma 2.1]{WPE24}, we can take constants $k\gg 1$ and $0<\delta\ll 1$ such that
 \bess
 k(H_{i,\ep}^*(x), V_{i, \ep}^*(x))\ge(H_i(x, T_\ep), V_i(x, T_\ep)),\;\;\; \delta(\phi_1(x),\phi_2(x))\le(H_i(x, T_\ep), V_i(x, T_\ep)).
 \eess
Then $k(H_{i,\ep}^*, V_{i, \ep}^*)$ and $\delta(\phi_1,\phi_2)$ are the upper and lower solutions of $(4.1_\ep)$, respectively.
Let $(H_{i,\ep}, V_{i,\ep})$ be the unique positive solution of
 \bess\begin{cases}
\partial_t H_i-{\cal L}_1 H_i=-\rho(x) H_i+\sigma_1(x)H_u(x)V_i,\, &x\in\oo,\; t>0,\\
\partial_t V_i-{\cal L}_2 V_i=\sigma_2(x)(V_{\sst{\cal D}}(x)+\ep-V_i)^+ H_i-\mu(x)(V_{\sst{\cal D}}(x)-\ep)V_i,\, &x\in\oo,\; t>0,\\
 H_i= V_i=0,\;\;\;&x\in\partial\oo,\; t>0,\\
(H_i(x, 0),\,V_i(x,0))=k(H_{i, \ep}^*,\,V_{i, \ep}^*),&x\in\bar\oo.
	\end{cases}\eess
Then $(H_{i,\ep}(x,t), V_{i,\ep}(x,t))$ is decreasing in $t$, and
\bess
 \delta(\phi_1(x),\phi_2(x))\le (H_{i,\ep}(x,t), V_{i,\ep}(x,t)),\;\;(H_i(x,t+T_\ep), V_i(x,t+T_\ep))\le (H_{i,\ep}(x,t), V_{i,\ep}(x,t))
 \eess
by the comparison principle. Therefore, $\dd\lim_{t\to\yy}(H_{i,\ep}(x,t), V_{i,\ep}(x,t))=(H_{i, \ep}^*, V_{i, \ep}^*)$ in $[C^2(\bar\oo)]^2$
by the uniform estimate and compact arguments (cf. \cite[Theorems 2.11, 3.14]{Wpara}) and the uniqueness of positive solutions of $(4.1_\ep)$.  So $\dd\limsup_{t\to\yy}(H_i(x,t), V_i(x,t))\le (H_{i, \ep}^*, V_{i, \ep}^*)$ and, by letting $\ep\to 0$,
 \bess
\limsup_{t\to\yy}(H_i(x,t), V_i(x,t))\le (H_i^*, V_i^*)\;\;\;
{\rm uniformly\; in}\;\; \bar\oo.
  \eess

On the other hand, let $W(x)=V_{\sst{\cal D}}(x)-V_i^*(x)$. Then $W(x)>0$ as  $V_i^*(x)<V_{\sst{\cal D}}(x)$ in $\oo$. Through the direct calculations we have
 \bess
 -{\cal L}_2 W+\big[\sigma_2(x)H_i^*+\mu(x)V_{\sst{\cal D}}(x)\big]W=\beta(x)V_{\sst{\cal D}}>0.
 \eess
By the Hopf boundary lemma, $\partial_\nu W\big|_{\partial\oo}<0$.
Making use of \cite[Lemma 2.1]{WPE24}, we can find a $0<r_0\le\ep_0$ such that $W>2r\vp$, i.e.,
  \bess
 V_i^*(x)<V_{\sst{\cal D}}(x)-2r\vp(x)\;\;\;{\rm in }\;\; \oo\;\;\forall\; 0<r<r_0.\eess
For such $r$, as $\dd\lim_{\ep\to 0} V_{i, \ep}^*=V_i^*$ in $C^2(\bar\oo)$ (the uniform estimate and compact arguments), similar to the proof of \qq{4.2}, we can find $0<\bar\ep<\ep_0$ such that
   \bess
V_{i,\bar\ep}^*(x)<V_i^*(x)+\frac r2\vp(x)\;\;\;{\rm in }\;\; \oo.\eess
Owing to $\dd\lim_{t\to\yy}V_{i,\bar\ep}(x,t)=V_{i,\bar\ep}^*$ in $C^2(\bar\oo)$, by \qq{4.2}, there exists $\bar T_{\bar\ep}\gg 1$ such that
\bess
 V_{i,\bar\ep}(x,t)<V_{i,\bar\ep}^*(x)+\frac r2\vp(x)<V_i^*(x)+ r\vp(x)<V_{\sst{\cal D}}(x)-r\vp(x)\;\;\;{\rm in }\;\; \oo,\;\;\forall\; t\ge\bar T_{\bar\ep}. \eess
Therefore, when $t\ge T_{\bar\ep}+\bar T_{\bar\ep}$, we have $V_i(x,t)\le V_{i,\bar\ep}(x,t-T_{\bar\ep})<V_{\sst{\cal D}}(x)-r\vp(x)$ in $\oo$. This combined with \qq{4.2} indicates that there exists $T_r\gg 1$ such that
 \bess
 V_i(x,t)<V_{\sst{\cal D}}(x)-r\vp(x),\;\;0<V_{\sst{\cal D}}(x)-r\vp(x)\le V(x,t)\le V_{\sst{\cal D}}(x)+r\vp(x) \eess
for all $t\ge T_r$ and $x\in\oo$. Consequently, $(H_i, V_i)$ satisfies
 \bess\begin{cases}
\partial_t H_i-{\cal L}_1 H_i=-\rho(x)H_i+\sigma_1(x) H_u(x)V_i,\, &x\in\oo,\; t>T_r,\\
\partial_t V_i-{\cal L}_2 V_i\geq\sigma_2(x)(V_{\sst{\cal D}}(x)-r\vp-V_i)H_i-\mu(x)(V_{\sst{\cal D}}(x)+r\vp)V_i\\
\hspace{20mm}=\sigma_2(x)(V_{\sst{\cal D}}(x)-r\vp-V_i)^+H_i
-\mu(x)(V_{\sst{\cal D}}(x)+r\vp)V_i,\, &x\in\oo,\; t>T_r,\\
 H_i= V_i=0,\;\;\;&x\in\partial\oo,\; t>T_r,\\
H_i(x, T_r)>0,\; 0<V_i(x, T_r)<V_{\sst{\cal D}}(x)-r\vp,&x\in\bar\oo.
 \end{cases}\eess

The remaining proof is the same as that of Theorem \ref{th3.2}(i), and the details are omitted.

(ii) The proof is the same as that of Theorem \ref{th3.2}(ii), and the details are also omitted.

(iii) In such case, $(2.3_{\cal D})$ has no positive solution and $\lim_{t\to\yy}(V_u+V_i)=0$. Therefore, $\lim_{t\to\yy}H_i=0$ by the first equation of \qq{1.2}. The proof is complete.
\end{proof}

Define boundary operators ${\cal B}_j[W]=a_j(x)\partial_\nu W+b_j(x)W$, where either $a_j(x)\equiv 0, b_j(x)\equiv 1$ or $a_j(x)\equiv 1, b_j(x)\ge 0$ with $b_j\in C^{1+\alpha}(\partial\oo)$, $j=1,2$. It should be mentioned that when the boundary conditions in \qq{1.1} and \qq{1.2} are replaced by ${\cal B}_1[H_i]=0$, ${\cal B}_2[V_u]={\cal B}_2[V_i]=0$, then the conclusions of this paper are still true and the proofs are similar.

\vskip 4pt \noindent {\bf Acknowledgment:} The author greatly appreciates the suggestions provided by  Dr. Lei Li and Dr. Wenjie Ni in the proof of  \qq{4.2}.

\end{document}